\newtheorem{thm}{Theorem}
\newtheorem{lem}{Lemma}
\newtheorem{defn}{Definition}
\newcommand{\N}{\mathbb{N}}
\begin{document}

\begin{frontmatter}[classification=text]

\title{Monochromatic Translated Product and Answering Sahasrabudhe's Conjecture} 

\author[SG]{Sayan Goswami\thanks{Supported by  NBHM postdoctoral fellowship with reference no: 0204/27/(27)/2023/R \& D-II/11927.}}

\begin{abstract}
This article resolves two related problems in Ramsey theory on the integers. We show that for any finite coloring of the set of natural numbers, there exist numbers $a$ and $b$ for which the configuration $\{a, b, ab, a(b+1)\}$ is monochromatic. By redefining the variables $a=x$ and $ab=y,$ our configurations transforms into $\{x,y,x+y,\frac{y}{x}\}.$ This finding has two main consequences: first, it disproves a conjecture proposed by J. Sahasrabudhe; second, it establishes a quotient version of the long-standing Hindman's conjecture, which asks for a monochromatic set of the form $\{x,y,x+y,xy\}$. 
\end{abstract}
\end{frontmatter}

\section{Introduction}

Arithmetic Ramsey theory studies the existence of monochromatic configurations within any finite coloring of the integers or the natural numbers \( \mathbb{N} \). Here, a ``coloring'' refers to a finite disjoint partition of \( \mathbb{N} \), and a set is said to be \emph{monochromatic} if it is entirely contained in one part of this partition. A collection \( \mathcal{F} \) of subsets of \( \mathbb{N} \) is said to be \emph{partition regular}\footnote{Although this notion is often referred to as \emph{weakly partition regular} in the literature, we omit the term ``weakly'' throughout this paper.} if, for every finite coloring of \( \mathbb{N} \), there exists a monochromatic member \( F \in \mathcal{F} \).

In 1916, I.~Schur~\cite{schur} proved that the pattern \( \{a, b, a+b\} \) is partition regular. Applying the map \( n \mapsto 2^n \), one immediately obtains that \( \{a, b, a \cdot b\} \) is also partition regular. These two results are commonly referred to as the \emph{additive Schur theorem} and the \emph{multiplicative Schur theorem}, respectively.

For more than a century, it remained unknown whether these two phenomena could be unified in a single statement. That is, whether \( \{a, b, a+b, a \cdot b\} \) is also partition regular (see~\cite{erdos}).  In 1979, R.~L.~Graham and N.~Hindman independently established such a result for two-colorings (see~\cite[Section~4]{trans}). Subsequently, N.~Hindman conjectured that the statement should hold for all finite colorings, a problem now known as the \emph{Hindman conjecture}. The strongest known partial result toward this conjecture is due to J.~Moreira~\cite{m}, who proved that there exist \( a, b \in \mathbb{N} \) such that the set \( \{a, ab, a+b\} \) is monochromatic; see also~\cite{al1} for a concise proof.

In 2017, while investigating asymmetric variants of Schur’s theorem, J.~Sahasrabudhe~( see \cite[Question 31.]{saha}) conjectured that there exist finite colorings of the natural numbers that avoid monochromatic configurations of the form \( \{a, b, a(b+1)\} \). In this paper, we disprove this conjecture by showing that every finite coloring of \( \mathbb{N} \) necessarily contains such a configuration. In fact, we establish a stronger result: the larger set \( \{a, b, ab, a(b+1)\} \) can always be found monochromatic.

On the other hand, by redefining the variables $a=x, ab=y$ our result shows that the pattern $\{x,y,x+y,\frac{y}{x}\}$ is monochromatic.
Our result can therefore be viewed as a quotient version of the Hindman conjecture. The technique of our proof builds upon the argument of Moreira~\cite{m}, who employed van der Waerden’s theorem, whereas we make iterative use of Hindman’s theorem.

The following statement is our main result, which will be proved in the next section.

\begin{thm}\label{p}
For any finite coloring of \( \mathbb{N} \), there exist \( a, b \in \mathbb{N} \) such that the set \( \{a, b, ab, a(b+1)\} \) is monochromatic.
\end{thm}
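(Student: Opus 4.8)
The plan is to reduce the four-point configuration to a three-point sum-product configuration living inside a single monochromatic additive IP set, and then to locate that multiplicative triple by an iterated application of Hindman's theorem in the spirit of Moreira.

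First I would fix a finite coloring $\chi:\N\to[r]$ and apply Hindman's theorem to obtain an increasing sequence $(x_i)$ whose finite-sum set $FS((x_i)) = \{\sum_{i\in F} x_i : F\subseteq\N \text{ finite nonempty}\}$ is monochromatic, say of color $c$. The point of passing to an $FS$-set is that it disposes of the awkward summand in $(a+1)b = ab+b$ for free: if one can arrange $b,\,ab\in FS((x_i))$ with representations supported on disjoint index sets $F,G$, then $ab+b=\sum_{i\in F\cup G} x_i$ again lies in $FS((x_i))$, hence also has color $c$. Thus the whole problem collapses to producing $a,b$ with $a,\,b,\,ab$ all in $FS((x_i))$ and with the supports of $b$ and $ab$ disjoint; the colors of all four points then take care of themselves.

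Second, I would produce this multiplicative pair by iterating Hindman's theorem to build multiplicative structure on top of the additive one. The difficulty is that products of generators are not themselves sums of generators, so I would repeatedly refine the generating sequence: at each stage, apply Hindman's theorem to an auxiliary coloring that records, for a candidate dilation factor $a$, the color $\chi(an)$ and the $FS$-position of each translate $an$ as $n$ ranges over the current generators, and pass to a sub-IP-set on which multiplication by $a$ sends a tail of $FS((x_i))$ back into $FS((x_i))$. Choosing $b$ from the high-index part of the resulting sequence, so that its support is automatically disjoint from that of a low-index $a$ and of $ab$, would then yield $a,b,ab\in FS((x_i))$ with $b$ and $ab$ disjointly supported, completing the argument through the reduction above.

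The main obstacle is exactly this alignment of a multiplicative dilation with a prescribed additive IP set: matching the additive combinatorics of $FS((x_i))$ with the multiplicative requirement $ab\in FS((x_i))$ while keeping the supports of $b$ and $ab$ disjoint. This is the precise point at which Moreira invoked van der Waerden's theorem to manufacture the needed scaled structure; the new ingredient here is to carry out the same alignment purely through repeated applications of Hindman's theorem, which is what both forces $ab$ back into the IP set and, via the disjoint-support mechanism, simultaneously delivers the fourth point $(a+1)b$.
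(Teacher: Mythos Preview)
Your reduction in the first two paragraphs is correct and attractive: once $b$ and $ab$ lie in a fixed $FS((x_i))$ with disjoint index supports, the fourth point $(a+1)b=ab+b$ is indeed free. The trouble is entirely in the second step. You now need $a,b,ab$ inside a \emph{single prescribed} additive IP set (with a support condition on $b,ab$), and nothing you have written accomplishes this. The ``auxiliary coloring that records the $FS$-position of $an$'' is not a finite coloring---there are infinitely many finite index sets---so Hindman's theorem does not apply to it; and if you only record the colour $\chi(an)$, the resulting monochromatic sub-IP-set tells you that $a\cdot(\text{sub-IP-set})$ is monochromatic of some colour $c'$, not that it lands back in $FS((x_i))$. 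There is no mechanism offered that forces $a\cdot FS((x_i'))\subseteq FS((x_i))$ for any candidate $a$, let alone for an $a$ that itself lies in $FS((x_i))$. You have traded the original four-point problem for the problem of placing a multiplicative Schur triple inside a fixed additive IP set, and that trade is not obviously a simplification; your final paragraph correctly names this obstacle but does not remove it.

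The paper avoids exactly this trap by \emph{not} insisting that all four points sit inside one monochromatic IP set. It builds a chain of IP sets $D_0,D_1,\ldots$ by repeatedly choosing $y_n\in D_{n-1}$ and passing to an IP set
\[
D_n \subseteq y_n^{-1}\bigl((-y_n+D_{n-1})\cap D_{n-1}\bigr)\cap A_{i_n},
\]
where $A_{i_n}$ is whichever colour class keeps the intersection IP. The colours $i_n$ are allowed to vary; a pigeonhole on them produces $j<n$ with $i_j=i_n=k$. Then $a\in D_n\subseteq A_k$, while $b=y_n\cdots y_{j+1}$, $ab$, and $(a+1)b$ all telescope into $D_j\subseteq A_k$. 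Two points of contrast: the translate $(a+1)b$ is handled directly by the shift $-y_n$ built into the recursion rather than by a disjoint-support trick, and $a$ lives in $D_n$, not in the same IP set $D_j$ that houses $b,ab,(a+1)b$. Letting the intermediate colours float and then pigeonholing is precisely what sidesteps the alignment problem on which your plan stalls.
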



\section{Proof of Theorem \ref{p}}

Before we proceed to the proof of our Theorem \ref{p}, we need to recall some technical definitions.

\begin{defn}\text{}
    \begin{enumerate}
        \item For any non-empty set $X,$ denote by $\mathcal{P}_f(X)$ the set of all non-empty finite subsets of $X.$
        \item For any sequence $\langle x_n\rangle_n$, denote by $$FS\left(\langle x_n\rangle_n\right)=\left\lbrace \sum_{n\in H}x_n:H\in \mathcal{P}_f(\N) \right\rbrace.$$

        \item A set $A\subseteq \N$ is called an IP set if there exists a sequence $\langle x_n\rangle_n$ such that $FS\left(\langle x_n\rangle_n\right)\subseteq A.$

         \item For any $R\in \N,$ a set $A\subseteq \N$ is called an $IP_R$ set if there exists a sequence $\langle x_n\rangle_{n=1}^R$ such that $FS\left(\langle x_n\rangle_{n=1}^R\right)\subseteq A.$

         \item For any $y\in \N,$ and $A\subseteq \N,$ denote by 

                       \begin{itemize}
                           \item $-y+A=\{n: n+y\in A\};$ and
                           \item $y^{-1}A=\{n: ny\in A\}.$
                       \end{itemize}
    \end{enumerate}
\end{defn}

To prove our result, we need the following three technical results.
The following lemma is a reformulation of the Hindman theorem \cite{finitesum}.
\begin{lem}\textup{\cite[Lemma 5.19.2.]{hs}}\label{1}
    Any finite coloring of an $IP$ set contains a monochromatic IP subset. 
\end{lem}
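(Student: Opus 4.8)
The statement is the ``sum subsystem'' form of Hindman's finite sums theorem, so the plan is to prove it through the algebra of the Stone--\v{C}ech compactification $(\beta\N,+)$. Fix a sequence $\langle x_n\rangle_n$ witnessing that $A$ is an $IP$ set, so that $FS(\langle x_n\rangle_n)\subseteq A$. Since any monochromatic $IP$ subset of $FS(\langle x_n\rangle_n)$ is automatically a monochromatic $IP$ subset of $A$, it suffices to find, for a given finite coloring $A=\bigcup_{i=1}^r C_i$, a sequence $\langle y_n\rangle_n$ with $FS(\langle y_n\rangle_n)\subseteq C_j$ for some $j$. The whole argument will produce such a $j$ and such a sequence from a single idempotent ultrafilter concentrated on $FS(\langle x_n\rangle_n)$.

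First I would set up the algebraic background: $(\beta\N,+)$ is a compact right topological semigroup, so by Ellis's theorem every nonempty closed subsemigroup contains an idempotent. The key object is
$$T=\bigcap_{m\in\N}\overline{FS(\langle x_n\rangle_{n=m}^\infty)}\subseteq\beta\N.$$
I would verify that $T$ is nonempty, closed, and closed under $+$ (the semigroup property is the one routine-but-essential check: if $p,q\in T$ and $m$ is given, one shows $FS(\langle x_n\rangle_{n=m}^\infty)\in p+q$ using that a tail of the sequence refines the previous one). Ellis's theorem then yields an idempotent $p\in T$, and by construction every tail $FS(\langle x_n\rangle_{n=m}^\infty)$, and hence $A$ itself, belongs to $p$.

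Next comes the extraction step. Because $p$ is an ultrafilter, exactly one color class, say $C_j$, lies in $p$. I would then run the Galvin--Glazer argument: for any $B\in p$ set $B^\star=\{n\in B:-n+B\in p\}$, and use idempotency ($p=p+p$) to show $B^\star\in p$ and, crucially, that $n\in B^\star$ implies $-n+B^\star\in p$. Starting from $B=C_j$, one inductively chooses $y_1\in C_j^\star$, then $y_2\in C_j^\star\cap(-y_1+C_j^\star)$, and in general $y_k$ in the intersection of $C_j^\star$ with $-s+C_j^\star$ over all partial sums $s$ of $y_1,\dots,y_{k-1}$; each such intersection lies in $p$ and is therefore nonempty. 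This guarantees $FS(\langle y_n\rangle_n)\subseteq C_j\subseteq A$, a monochromatic $IP$ subset.

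The main obstacle is the Galvin--Glazer sublemma that $n\in B^\star$ forces $-n+B^\star\in p$: this is where idempotency is used in full, and it is what makes the inductive choices compatible so that every new partial sum again lands in the ``starred'' set and the construction never stalls. The verification that $T$ is a subsemigroup is the only other nontrivial point; both are standard once the ultrafilter machinery is in place, but they are the steps I would write out with care.
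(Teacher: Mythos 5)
Your proposal is correct and matches the paper's treatment: the paper gives no proof of this lemma but cites \cite[Lemma~5.19.2]{hs}, and your idempotent-ultrafilter argument (the closed subsemigroup $T=\bigcap_{m}\overline{FS(\langle x_n\rangle_{n=m}^{\infty})}$, an idempotent $p\in T$ with $A\in p$, and the Galvin--Glazer extraction via $B^{\star}$) is precisely the standard proof given in that reference. The two verifications you single out --- that $T$ is a subsemigroup and that $n\in B^{\star}$ implies $-n+B^{\star}\in p$ --- are exactly the essential steps, and both go through as you describe.
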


The proof of the next two lemmas is standard, but we give the proofs here for the sake of completeness.

\begin{lem}\label{2}
For any IP set \( A \subseteq \mathbb{N} \) and any natural number \( y \), there exists another IP set \( A' \subseteq A \) satisfying \( y^{-1}A' \subseteq \mathbb{N} \).
\end{lem}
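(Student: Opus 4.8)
The plan is to reduce the whole statement to the following concrete refinement: \emph{every IP set contains an IP subset all of whose members are divisible by $y$.} Indeed, once I produce such a subset $A'\subseteq A$, dividing every element by $y$ carries the finite-sum structure to a finite-sum structure, so $y^{-1}A'$ is again an IP set; in particular the membership $y^{-1}A'\subseteq\mathbb{N}$ that is asked for follows immediately. So the substantive content is the divisibility refinement, and the displayed conclusion is its harmless by-product.

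First I would fix a sequence $\langle x_n\rangle_n$ witnessing $FS(\langle x_n\rangle_n)\subseteq A$ and pass to the partial sums $s_n=\sum_{i=1}^n x_i$. Reducing mod $y$ and applying the pigeonhole principle, I can select indices $n_1<n_2<\cdots$ along which the $s_{n_k}$ all share one residue class modulo $y$. Setting the block sums $z_k=s_{n_{k+1}}-s_{n_k}=\sum_{i=n_k+1}^{n_{k+1}}x_i$, each $z_k$ is divisible by $y$. Because the index intervals $(n_k,n_{k+1}]$ are pairwise disjoint, any finite sum $\sum_{k\in H}z_k$ is itself a finite sum of distinct $x_i$'s, and hence $FS(\langle z_k\rangle_k)\subseteq FS(\langle x_n\rangle_n)\subseteq A$. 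I would then take $A'=FS(\langle z_k\rangle_k)$: it is an IP subset of $A$, every element is a multiple of $y$, and writing $z_k=y\,w_k$ gives $y^{-1}A'=FS(\langle w_k\rangle_k)$, an IP set.

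As an alternative route fully in the spirit of the paper, I could replace the pigeonhole step by a direct appeal to Lemma~\ref{1}: coloring $A$ by residue class modulo $y$ yields a monochromatic IP subset $FS(\langle z_k\rangle_k)\subseteq A$ whose elements are all congruent to some fixed $r\pmod{y}$. The telescoping observation $z_k\equiv(z_1+\cdots+z_k)-(z_1+\cdots+z_{k-1})\equiv r-r=0\pmod{y}$ for $k\ge 2$ then forces the generators $z_2,z_3,\dots$ to be divisible by $y$, so $FS(\langle z_k\rangle_{k\ge 2})$ is the required $A'$. I do not anticipate a real obstacle here; the only point deserving care is checking that the refined finite-sum set stays inside $A$, which is delivered either by the disjointness of the index blocks in the first route or by applying Lemma~\ref{1} to $A$ itself in the second.
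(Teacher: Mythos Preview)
Your argument is correct; both routes work cleanly. The paper proceeds differently: it first appeals to Hindman's theorem together with a compactness argument to obtain an $R=R(y)$ such that every $IP_R$ set, colored by residue modulo $y$, contains a monochromatic $IP_y$ configuration (so that the sum of those $y$ equally-residued elements is divisible by $y$), and then chops the generating sequence of $A$ into successive blocks of length $R$, harvesting one $y$-divisible element from each block. Your first route is strictly more elementary: the pigeonhole on partial sums $s_n$ bypasses Hindman's theorem and compactness entirely and delivers the $y$-divisible block sums $z_k$ directly. Your second route sits between the two, invoking Lemma~\ref{1} once on the whole IP set rather than a finitary version block by block; note incidentally that your telescoping actually forces $r\equiv 0$ outright, since each $z_k$ itself lies in the finite-sum set and hence $z_k\equiv r$ while you have shown $z_k\equiv 0$ for $k\ge 2$. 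The paper's approach has the minor benefit of making the finitary content explicit (the bound $R$ depends only on $y$), but for the lemma as stated your pigeonhole argument is shorter and entirely sufficient.
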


\begin{proof}
By passing to a subsequence if necessary, we may assume that 
\(A\) contains \(FS(\langle x_n \rangle)\) for some sequence 
\(\langle x_n \rangle \subset (y\mathbb{N}+j)\), where 
\(j \in \{0,1,\ldots ,y-1\}\).

Now let \(F\) be any finite set with \(|F|=y\). Then
\[
x_F=\sum_{n\in F} x_n \in y\mathbb{N}.
\]
Thus every sum of exactly \(y\) distinct terms from the sequence 
is divisible by \(y\).

Selecting infinitely many distinct elements from the sequence 
\(\langle x_F\rangle\) and using them as generators, it follows that 
\(A\) contains another finite sums set
\[
A' = FS(\langle y_n\rangle),
\]
where each \(y_n\) is of the form
\[
y_n = y z_n
\]
for some integer \(z_n\). And hence $$FS(\langle z_n\rangle)= y^{-1}A'\subseteq \mathbb{N}.$$ This completes the proof.
\end{proof}

\begin{lem}\label{3}
For any \( IP \) set \( A \subseteq \mathbb{N} \), there exists \( y \in A \) such that 
\[
(-y + A) \cap A
\]
is also an \( IP \) set.
\end{lem}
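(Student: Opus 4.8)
The plan is to exploit the $FS$-structure of $A$ directly, without invoking any coloring argument. Since $A$ is an $IP$ set, I would fix a sequence $\langle x_n\rangle_{n=1}^{\infty}$ with $FS(\langle x_n\rangle_n)\subseteq A$, and simply take $y=x_1$. Note first that $y=x_1\in FS(\langle x_n\rangle_n)\subseteq A$, so the candidate $y$ indeed lies in $A$; it then remains only to show that $(-y+A)\cap A$ is again an $IP$ set.

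The witnessing sequence I would use is the tail $\langle x_n\rangle_{n=2}^{\infty}$, and the key claim is that $FS(\langle x_n\rangle_{n=2}^{\infty})\subseteq (-y+A)\cap A$. To verify it, take any $H\in\mathcal{P}_f(\{2,3,\dots\})$ and set $s=\sum_{n\in H}x_n$. On the one hand $s\in FS(\langle x_n\rangle_n)\subseteq A$. On the other hand, because $1\notin H$, the quantity $s+y=\sum_{n\in\{1\}\cup H}x_n$ is again a genuine finite sum over distinct indices, hence lies in $FS(\langle x_n\rangle_n)\subseteq A$; this is precisely the statement $s\in -y+A$. Combining the two observations gives $s\in(-y+A)\cap A$, which proves the claim.

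Finally, since the tail $\langle x_n\rangle_{n=2}^{\infty}$ is itself an infinite sequence, the inclusion $FS(\langle x_n\rangle_{n=2}^{\infty})\subseteq (-y+A)\cap A$ exhibits $(-y+A)\cap A$ as an $IP$ set by definition, completing the argument.

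As for the main obstacle: there is essentially none once the right $y$ is chosen, so I expect this lemma to be the easy input of the three. The only point that genuinely requires care is ensuring that adding $y=x_1$ to a tail sum never reuses an index; this is exactly why the witnessing sequence must be taken to start at $n=2$ rather than at $n=1$, and it is the reason we select $y$ to be the \emph{first} generator of the $FS$-representation of $A$.
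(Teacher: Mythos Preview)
Your proof is correct and matches the paper's argument essentially line for line: both take $y=x_1$ and verify that $FS(\langle x_n\rangle_{n\ge 2})\subseteq(-y+A)\cap A$. If anything, your version is marginally more careful, since you work with the inclusion $FS(\langle x_n\rangle_n)\subseteq A$ from the definition of an $IP$ set rather than writing $A=FS(\langle x_n\rangle_n)$ as the paper does.
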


\begin{proof}
Let \( A = FS(\langle x_n \rangle_n) \) be an \( IP \) set generated by a sequence \( \langle x_n \rangle_n \), i.e. 
\[
A = \left\{ \sum_{n \in F} x_n :  F \in \mathcal{P}_f(\N) \right\}.
\]
Take \( y = x_1 \in A \). Then any finite sum of distinct elements from \( (x_{n})_{n \ge 2} \) is of the form \( \sum_{n \in F} x_n \) with \( F \subseteq \{2,3,\dots\} \), and hence 
\[
\sum_{n \in F} x_n + y = \sum_{n \in F \cup \{1\}} x_n \in A.
\]
Thus, every element of \( FS(x_{n})_{n \ge 2} \) belongs to \( (-y + A) \cap A \). Since \( FS(x_{n})_{n \ge 2} \) is itself an \( IP \) set, the intersection \( (-y + A) \cap A \) contains an \( IP \) set, and hence is \( IP \).
\end{proof}

Now we are in the position to prove our Theorem \ref{p}.

\begin{proof}[Proof of  Theorem \ref{p}:] Suppose that \( \mathbb{N} \) is finitely colored, say  
\[
\mathbb{N} = \bigcup_{i=1}^r A_i.
\]
Without loss of generality, assume that \( A_1 \) is an \( IP \) set, and set \( D_0 = A_1 \).
By Lemma~\ref{3}, we can choose an element \( y_1 \in A_1 \) such that 
\( (-y_1 + A_1) \cap A_1 \) is an \( IP \) set. 
Then, by Lemma~\ref{2}, we can choose an $IP$ set
\[
C_1 \subset y_1^{-1}\big( (-y_1 + A_1) \cap A_1 \big),
\]
which is a subset of \( \mathbb{N} \). 
Finally, by Lemma~\ref{1}, there exists \( i_1 \in \{1, 2, \ldots, r\} \) such that
\[
D_1 = C_1 \cap A_{i_1}
     = y_1^{-1}\big( (-y_1 + A_1) \cap A_1 \big) \cap A_{i_1}
\]
is an \( IP \) set.

Using Lemmas~\ref{2} and~\ref{3}, choose \( y_2 \in D_1 \) such that 
\[
C_2 \subseteq y_2^{-1}\big( (-y_2 + D_1) \cap D_1 \big) \subset \mathbb{N}
\]
is an \( IP \) set. 
Then, by Lemma~\ref{1}, there exists \( i_2 \in \{1,2,\ldots,r\} \) such that 
\( D_2 = C_2 \cap A_{i_2} \) is an \( IP \) set.
Proceeding inductively, for each \( n \in \mathbb{N} \), choose \( y_n \in D_{n-1} \) and \( i_n \in \{1,2,\ldots,r\} \) such that the set 
    \[
    D_n = y_n^{-1}\big( (-y_n + D_{n-1}) \cap D_{n-1} \big) \cap A_{i_n}
    \]
    is an \( IP \) set.

By the Pigeonhole Principle, there exists \( k \in \{1,2,\ldots,r\} \) such that 
\( A_{i_j} = A_{i_n} = A_k \) for some indices \( j < n \).
Now choose \( x \in D_n \subseteq A_{i_n} = A_k \).

From the construction in~(2), we have
\[
x y_n \in D_{n-1} \subseteq y_{n-1}^{-1} D_{n-2} \subseteq \cdots \subseteq 
y_{n-1}^{-1} \cdots y_{j+1}^{-1} D_j.
\]
Hence,
\[
x y_n \cdots y_{j+1} \in D_j \subseteq A_{i_j} = A_k.
\]
Moreover, since \( y_n \in D_{n-1} \), applying the same argument gives
\[
y_n \cdots y_{j+1} \in D_j \subseteq A_{i_j} = A_k.
\]
Thus, both \( y_n \cdots y_{j+1} \) and \( x y_n \cdots y_{j+1} \) lie in the same color class \( A_k \).

Again, observe that 
\[
x y_n + y_n \in D_{n-1},
\] 
so by the same argument as above,
\[
(x y_n + y_n) \, y_{n-1} \cdots y_{j+1} \in D_j \subseteq A_{i_j}.
\]

Now define
\begin{enumerate}
    \item \( b = x \), and
    \item  \( a = y_n \cdots y_{j+1} \).
\end{enumerate}

Then \( a, b \in A_k \), and
\[
x y_n \cdots y_{j+1} = a b \in A_k, \quad \text{and} \quad 
(x y_n + y_n) \, y_{n-1} \cdots y_{j+1} = a(b+1) \in A_k.
\]

Hence, the set
\[
\{a, b, ab, a(b+1)\} \subseteq A_k
\]
is monochromatic. 

This completes the proof.

\end{proof}






\section*{Acknowledgments} 
The author is thankful to the referees for their comments on the previous draft of this paper and also for the proof of the Lemma \ref{2}. This paper is supported by NBHM postdoctoral fellowship with reference no: 0204/27/(27)/2023/R \& D-II/11927.

\bibliographystyle{amsplain}


\begin{aicauthors}
\begin{authorinfo}[SG]
  Sayan Goswami\\
  Ramakrishna Mission Vivekananda Educational and Research Institute (RKMVERI), \\
  Belur, India\\
  sayan92m\imageat{}gmail\imagedot{}com \\
\end{authorinfo}
\end{aicauthors}

\end{document}